\newtheorem{prop}{Proposition}[section]
\newtheorem{thm}[prop]{Theorem}
\newtheorem{cor}[prop]{Corollary}
\theoremstyle{definition}
\newtheorem*{ex}{Example}
\newtheorem{rem}[prop]{Remark}
\newtheorem*{ack}{Acknowledgement}
\def\co{\colon\thinspace}
\newcommand{\C}{\mathbb C}
\newcommand{\rmd}{\mathrm d}
\newcommand{\D}{\mathbb D}
\newcommand{\rme}{\mathrm e}
\newcommand{\rmi}{\mathrm i}
\newcommand{\MM}{\mathcal M}
\newcommand{\R}{\mathbb R}
\newcommand{\Z}{\mathbb Z}
\newcommand{\lra}{\longrightarrow}
\newcommand{\ra}{\rightarrow}
\DeclareMathOperator{\ev}{\mathrm{ev}}
\DeclareMathOperator{\Int}{\mathrm{Int}}
\DeclareMathOperator{\proj}{\mathrm{proj}}
\begin{document}

\author{Kai Zehmisch}
\address{Mathematisches Institut, Westf\"alische Wilhelms-Universit\"at M\"unster, Einsteinstr. 62,
D-48149 M\"unster, Germany}
\email{kai.zehmisch@uni-muenster.de}

\title[Analytic filling]
{Analytic filling of totally real tori}

\date{25.02.2016}

\begin{abstract}
We prove that any embedded Maslov index two
analytic disc attached to a totally real torus in 
the complex two-dimensional affine space
extends to an analytic filling
provided that the torus is contained in
a regular level set of a strictly plurisubharmonic function.
\end{abstract}

\subjclass[2010]{32E99, 58J32, 57R52, 53D35}
\thanks{KZ is partially supported by DFG grant ZE 992/1-1.}

\maketitle

%%%%%%%%%%%%%%%%%%%%%%%%%%%%%%%%%%%
%%%%%%%%%%%%%%%%%%%%%%%%%%%%%%%%%%%

\section{Introduction\label{sec:intro}}

We consider an embedded $2$-dimensional torus $T$
in the affine space $\C^2$.
We assume that $T$
is {\bf totally real} in the sense that
no complex line is tangent to the torus $T$.
We abbreviate
\[
S^1=\R/2\pi\Z
\quad\text{and}\quad
\D=\{|z|\leq1\}\subset\C\;.
\]
The totally real torus $T$ admits an
{\bf analytic filling} provided there
exists an embedding $F$ of the solid torus
$S^1\times\D$ into $\C^2$ such that
\begin{itemize}
\item [(F$_1$)]
   the boundary $S^1\times\partial\D$
   is mapped onto $T$, and
\item [(F$_2$)]
  for all $t\in S^1$ the restriction of
  $u_t:=F(t,\,.\,)$ to the interior of $\D$
  defines a holomorphic map,
  i.e., $u_t$ solves the Cauchy-Riemann equation
  \[\partial_xu_t+\rmi\partial_yu_t=0\]
  on $\Int\D$,
  where $z=x+\rmi y$.
\end{itemize}
If a thickened disc
$(-\varepsilon,\varepsilon)\times\D$
for some $\varepsilon>0$
is embedded instead the map $F$
is called a {\bf local filling}.
The aim of this note is to prove the following
extension result.

\begin{thm}
  \label{thm:main}
  If $T$ is contained in a regular
  level set of a strictly plurisubharmonic function
  on $\C^2$ then any local filling of $T$
  extends after restriction 
  to a global analytic filling of $T$.
\end{thm}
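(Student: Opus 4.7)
The plan is to realize the global filling as a circle's worth of analytic discs in the moduli space $\MM$ of embedded Maslov-index-$2$ holomorphic discs with boundary on $T$, and then to verify that this circle parametrizes an embedding $S^1\times\D\to\C^2$. The given local filling immediately provides an open arc in $\MM$; the task is to continue and close up that arc.

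First I would set up the Fredholm theory. For an embedded disc $u\co(\D,\partial\D)\to(\C^2,T)$ of Maslov index $2$, the linearized Cauchy--Riemann operator has Fredholm index $n+\mu=2+2=4$, and after quotienting by the $3$-dimensional reparametrization group $\mathrm{Aut}(\D)$ one expects $\dim\MM=1$. Automatic transversality for Maslov-$2$ discs in $\C^n$ makes every such disc regular, so the implicit function theorem gives a smooth $1$-manifold structure on a neighborhood of the initial disc in $\MM$, and the set of times $t\in S^1$ for which the local filling admits a continuous extension as a family of analytic discs with boundary on $T$ is open.

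Closedness of that set is the core technical point, and this is where the hypothesis on $\rho$ enters. The positive $(1,1)$-form $\omega=-dd^c\rho$ pulls back nonnegatively to each holomorphic disc, so Stokes' theorem on the boundary curves inside $T\subset\{\rho=c\}$ yields a uniform area/energy bound; simultaneously the maximum principle for $\rho\circ u$ confines every disc in the family to the compact sublevel set $\{\rho\leq c\}$. Sphere bubbles are excluded because $|z|^2$ is a strictly plurisubharmonic exhaustion of $\C^2$ and hence admits no non-constant holomorphic sphere. Ruling out disc bubbles is the main obstacle: any would-be bubble attached to $T$ has interior dipping into $\{\rho<c\}$ while its boundary lies in $\{\rho=c\}$, so its $\omega$-area is strictly positive; since the Maslov index is additive under bubbling and equals $2$ on the limit configuration, the plurisubharmonic positivity leaves no room for non-constant bubbles. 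Gromov compactness then identifies the component of $\MM$ containing the initial arc with a circle.

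The parametrization $S^1\to\MM$ assembles into a continuous map $F\co S^1\times\D\to\C^2$ satisfying (F$_1$) and (F$_2$) by construction; a preliminary restriction of the $t$-interval is needed if the local filling already traces out $\MM$ more than once, which accounts for the ``after restriction'' clause of the statement. Embeddedness follows from intersection-theoretic arguments in dimension four: positivity of intersections forces two distinct members of the family to have disjoint interiors, while the Maslov-$2$ count makes the boundary map $S^1\times\partial\D\to T$ a local, and hence a global, diffeomorphism, so the boundary circles foliate $T$ and the entire solid torus embeds. The whole argument hinges on the exclusion of disc bubbles through the plurisubharmonic level set condition; this is the step demanding the most care.
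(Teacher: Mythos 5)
Your overall plan is correct and aligns with the paper's strategy: set up the moduli space of Maslov-$2$ discs, establish it is a $1$-manifold via automatic transversality, prove compactness, identify the component with a circle, and realize the filling by the evaluation map. The energy bound via Stokes and the exclusion of sphere bubbles by Liouville are both as in the paper.

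The essential gap is in how you rule out disc bubbling, which you yourself flag as ``the step demanding the most care.'' You argue that every would-be bubble has strictly positive $\omega$-area by the maximum principle, and that since the Maslov index is additive and equals $2$ on the limit, ``the plurisubharmonic positivity leaves no room for non-constant bubbles.'' This does not follow. Positive energy of a holomorphic disc with boundary on a general totally real torus gives you \emph{no} control on its Maslov index; positivity of area entails positivity of Maslov index only under a monotonicity hypothesis, which is not available here. In fact non-constant bubbles with Maslov index $\leq 0$ cannot be excluded by energy positivity alone, so the additivity $2=\sum m_j\mu([v^j])$ by itself is not a contradiction. The paper's argument for compactness (Proposition~\ref{prop:cptness}) is genuinely harder and proceeds differently: one factors each component $u^j$ through a simple disc $v^j$, writes $[u_0]=\sum m_j[v^j]$, then uses $[u_0]\bullet[u_0]=0$ together with non-negativity of intersections (Section~\ref{subsec:posint}) to force $[v^j]\bullet[v^j]\leq -1$ for each $j$, then feeds this into the relative adjunction inequality $\mu([v^j])\leq[v^j]\bullet[v^j]+2$ (Section~\ref{subsec:reladjineq}) combined with $\mu([u_0])=2$ to isolate a disc $v^1$ with self-intersection exactly $-1$, and finally derives a contradiction via a parity argument on boundary intersections inside the cylinder $T\setminus u_0(\partial\D)$. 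None of this appears in your proposal, and the energy-positivity heuristic you give cannot replace it.

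Two smaller points. The ``after restriction'' clause is not, as you suggest, about the local filling possibly tracing out $\MM$ more than once (it cannot, being an embedding of a thickened disc); in the paper it arises because the given local filling and the constructed global filling differ by a time-dependent reparametrization $(f,g_t)$ in the M\"obius group $G$, and one must interpolate these to the identity near $t=0$, using connectedness of $G$, before the two can be spliced. Also, the passage from a compact circle $\MM$ to an \emph{embedding} $S^1\times\D\to\C^2$ is glossed over: the paper works via the moduli space $\MM^1$ with one marked point, three global transverse constraints $K_0,K_1,K_2$ chosen along characteristic leaves that cross the cut-off torus (Proposition~\ref{through}), and an explicit diffeomorphism $\ev\co\MM_{1,\rmi,-1}\times\partial\D\to T$; the intersection-theoretic part you invoke is necessary but by no means sufficient on its own for the boundary map to be a global diffeomorphism.
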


The {\bf Clifford torus} is the embedded
totally real torus
given by the product of unit circles
$\partial\D\times\partial\D$
inside $\C\times\C$
so that the solid tori $\D\times\partial\D$
and $\partial\D\times\D$ induce
an analytic filling each.
Notice that the Clifford torus
is contained in the expanded $3$-sphere
$\sqrt{2}\cdot S^3$ of all vectors having length
$\sqrt{2}$.
A small perturbation of the Clifford torus
inside $\sqrt{2}\cdot S^3$ that fixes
a neighbourhood of $\{1\}\times\partial\D$
or $\partial\D\times\{1\}$ for example
yields totally real tori that admit local fillings
but might not be foliated by circles of the Hopf fibration.
In view of Theorem \ref{thm:main}
the perturbed Clifford tori still
admit global analytic fillings.
This can be obtained by the perturbation results
of Alexander \cite{al81} and Bedford \cite{be81},
alternatively.

More generally, we consider the $2$-sphere $S^2$
in $S^3\subset\C\times\C$
obtained by intersecting
with the real hyperplane $\C\times\R$.
The intersections with the complex lines
$\C\times\{s\}$, $s\in (-1,1)$,
define an analytic filling
of $S^2$, cf.\ \cite{gz10}.
The filling collapses at the singular points
$(0,\pm1)$, at which $S^2$ is tangent to
the complex lines $\C\times\{\pm1\}$.
Attache an embedded $1$-handle to $S^2$
inside $S^3$ that is obtained
from a small tubular neighbourhood
of an embedded path that connects $(0,\pm1)$
and is everywhere transverse to the
field of complex lines $TS^3\cap\rmi TS^3$,
cf.\ \cite[Section 3.3.2]{gei08}.
The construction results in a possibly knotted
totally real torus inside $S^3$,
cf.\ \cite[Section 4.5 \& 5.3]{ho93}.
The totally real torus admits a local and, hence,
with Theorem \ref{thm:main} a global analytic filling.
A general existence result
for analytic fillings of totally real tori that are unknotted in $S^3$
is obtained by Duval--Gayet \cite{dg14}.

We remark that
the example of attaching a $1$-handle
generalizes to small perturbations
of embedded $2$-spheres in regular level sets
of strictly plurisubharmonic functions invoking
Giroux elimination lemma \cite{gi91} and local
Bishop discs \cite{bi65} or global fillings
obtained by Bedford--Gaveau \cite{bg83},
Gromov \cite{gr85},
Eliashberg \cite{el89},
Bedford--Klingenberg \cite{bk91},
Kruzhilin \cite{kr91},
and Ye \cite{ye98}.

%%%%%%%%%%%%%%%%%%%%%%%%%%%%%%%%%%%

\subsection{Totally real isotopies\label{subsec:realiso}}

By the results of Borrelli \cite{bo99,bo02}
there are infinitely many isotopy classes
of embedded totally real tori in $\C^2$.
Precisely one class admits analytic fillings,
see Proposition \ref{prop:realisot}.
In Section \ref{sec:filliso} we will prove:

\begin{cor}
 \label{cor:realiso}
 If $T$ is contained in a regular
 level set of a strictly plurisubharmonic function
 on $\C^2$ and admits a local filling,
 then $T$ is isotopic to the Clifford torus through
 totally real tori.
\end{cor}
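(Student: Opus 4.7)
The plan is to chain Theorem \ref{thm:main} together with Proposition \ref{prop:realisot}. The hypotheses of the Corollary match the hypotheses of Theorem \ref{thm:main} verbatim: $T$ is a totally real torus in $\C^2$ lying in a regular level set of a strictly plurisubharmonic function and admitting a local filling. So the first step is to apply Theorem \ref{thm:main} to promote the local filling, after restriction, to a global analytic filling $F\co S^1\times\D\hookrightarrow\C^2$ of $T$ satisfying $(F_1)$ and $(F_2)$.

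The second step is to feed this global analytic filling into Proposition \ref{prop:realisot}. According to the discussion in Section \ref{subsec:realiso}, Borrelli \cite{bo99,bo02} produces infinitely many totally real isotopy classes of embedded $2$-tori in $\C^2$, and Proposition \ref{prop:realisot} singles out exactly one such class as the one carrying analytic fillings. The Clifford torus $\partial\D\times\partial\D$ belongs to this distinguished class, since it admits the analytic filling by $\D\times\partial\D$ exhibited in the introduction. Since $T$ admits an analytic filling by step one, uniqueness forces $T$ and the Clifford torus to be in the same totally real isotopy class, which is precisely the conclusion of the Corollary.

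The main obstacle therefore does not lie in the two-step deduction above but is pushed entirely into Proposition \ref{prop:realisot}: the real work is to extract a totally real isotopy from the mere existence of an analytic filling. A natural route would be to use the holomorphic discs $u_t$ and a framing of the normal bundle along $\partial\D$ to build a one-parameter family of totally real torus embeddings interpolating between $T$ and the standard model $\partial\D\times\partial\D$, controlling the Borrelli invariant throughout. Once Proposition \ref{prop:realisot} is available in Section \ref{sec:filliso}, no further argument is needed for Corollary \ref{cor:realiso}.
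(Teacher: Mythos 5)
Your two-step deduction — Theorem \ref{thm:main} upgrades the local filling to a global analytic filling, and Proposition \ref{prop:realisot} then supplies the totally real isotopy to the Clifford torus — is exactly the paper's argument. Only a small inaccuracy: Proposition \ref{prop:realisot} does not work by a uniqueness-of-classes argument; it directly constructs an isotopy from the filling to the Clifford parametrization, so no appeal to Borrelli's classification or ``uniqueness forces'' is needed in the deduction.
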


%%%%%%%%%%%%%%%%%%%%%%%%%%%%%%%%%%%

\subsection{Non-trapped characteristics\label{subsec:nontrch}}

The complex lines tangent
to a regular level set of a strictly
plurisubharmonic function
constitute a field of real $2$-dimensional planes,
which turns out to be a contact structure,
see Section \ref{subsec:pscon}
and Section \ref{sec:gener} below.
The foliation on $T$ cut out by the contact structure
is the so-called {\bf characteristic foliation},
which is transverse to the foliation
obtained by the boundary circles of an analytic filling.
In Section \ref{subsec:nontrapping} we will show:

\begin{cor}
 In the context of Theorem \ref{thm:main}
 cylinders in $T$ cut out by boundary circles
 of holomorphic discs that belong to an analytic filling
 do not admit trapped characteristics.
\end{cor}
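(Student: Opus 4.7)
The plan is to reduce the absence of trapped characteristics to a global monotonicity statement: the $S^1$-parameter $t$ of the filling $F$ increases strictly along every characteristic leaf on $T$. Granting this, a characteristic contained in an open cylinder $C\subset T$ between two boundary circles $\gamma_{t_1}=u_{t_1}(\partial\D)$ and $\gamma_{t_2}=u_{t_2}(\partial\D)$ of the filling cannot be trapped: its $t$-value, lifted to $\R$, would be confined to a bounded interval while growing at a uniform positive rate, a contradiction. So the entire argument reduces to verifying that the characteristic foliation is transverse to the boundary-circle foliation, which is the statement already recorded in Section~\ref{subsec:nontrch}.

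The transversality I would verify pointwise. At $p=u_t(\rme^{\rmi\theta})\in T$, the Cauchy--Riemann equation in polar coordinates gives $du_t(\partial_\theta)=\rmi\,du_t(\partial_r)$ at $r=1$. The function $\varphi\circ u_t$ is subharmonic on $\D$ and attains its maximum on $\partial\D$; it cannot be constant, for then $u_t(\D)$ would be an integral disc of the contact structure $\xi=T\Sigma\cap\rmi T\Sigma$, which is forbidden by non-integrability. The Hopf boundary lemma thus gives $d\varphi\bigl(du_t(\partial_r)\bigr)>0$, so $du_t(\partial_r)\notin T_p\Sigma$, and by $\rmi$-invariance of $\xi$ the circle tangent $du_t(\partial_\theta)$ avoids $\xi_p$. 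Totally realness rules out $T_pT=\xi_p$, so $T_pT\cap\xi_p$ is a one-dimensional characteristic line, and by the previous sentence it is distinct from the tangent line of $\gamma_t$ inside the two-plane $T_pT$.

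With transversality established, identify $T\cong S^1\times\partial\D$ via $F$ and let $X$ be a nonvanishing vector field directing the characteristic foliation. Then $dt(X)$ is nowhere zero, has constant sign on the connected $T$, and admits a uniform lower bound $\varepsilon>0$ by compactness. Lifting a cylinder $C$ cut out by $\gamma_{t_1},\gamma_{t_2}$ to a strip of finite $t$-width in the universal cover $\R\times\partial\D$, any lifted characteristic arc in the strip increases $t$ at rate $\geq\varepsilon$, hence exits both sides of the strip in finite time; projecting back, the characteristic exits $C$ through $\gamma_{t_1}$ and $\gamma_{t_2}$, so $C$ carries no trapped characteristic. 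The main obstacle in this plan is the transversality step, which rests on the Hopf boundary lemma together with the exclusion of integral discs of the contact structure; the remaining dynamical argument is elementary one-dimensional dynamics.
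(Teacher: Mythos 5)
Your argument is correct, but it takes a genuinely different route from the paper's. You observe that once the global analytic filling $F\co S^1\times\D\to\C^2$ is available (i.e.\ once Theorem~\ref{thm:main} is in hand), the torus is foliated by the boundary circles $F(t,\partial\D)$, and these circles are everywhere transverse to the characteristic foliation by the Hopf boundary-point lemma applied to $H\circ u_t$. The resulting $t$-coordinate is then strictly monotone along characteristics, with a compactness-supplied uniform lower bound $\varepsilon$ on $dt(X)$; a one-dimensional dynamics argument finishes. The paper instead proves the stronger intermediate Proposition~\ref{through} \emph{before} the global filling exists: it surrounds a characteristic leaf $\ell$ by the images of local fillings through each of its points, takes the supremum $s_0$ of the heights these images reach in the cut-open cylinder $S^1\times(0,1)$, and shows by an overlap argument with one more local filling at height $s_0$ that $s_0=1$. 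That sup argument is exactly what makes the gluing in Section~\ref{subsec:guingloc} possible, so the paper cannot use your shortcut at that stage.

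So the comparison is: your argument is simpler and uses only the transversality already recorded in Section~\ref{subsec:pscon} together with compactness of $T$, but it presupposes the conclusion of Theorem~\ref{thm:main}. The paper's Proposition~\ref{through} buys a proof that works with only a local filling and therefore feeds \emph{into} the construction of the global filling; the corollary is then the proposition restated. For the corollary exactly as phrased (``in the context of Theorem~\ref{thm:main}''), your proof is a valid and cleaner alternative. One cosmetic remark: the nonconstancy of $H\circ u_t$ follows most directly from strict plurisubharmonicity (and $u_t$ being an embedding, hence an immersion), without invoking nonintegrability of $\xi$; both reach the same conclusion. You should also state explicitly that your $t$-coordinate is globally defined on $T\cong S^1\times\partial\D$ as an $S^1$-valued function and that you lift to $\R$ only after restricting to a cylinder $C$, so that ``$t$ confined to a bounded interval'' makes sense.
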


In view of \cite{grz14} we call a characteristic
which does not connect the two boundary components
of the cylinder to be {\bf trapped}.
The corollary says
that any boundary circle induced by an analytic filling
is a global circle of section for the characteristic
foliation of $T$.
In particular, any integrating and non-vanishing
vector field defines a Poincar\'e section map
on all boundary circles.
Hence, the characteristic foliation
is {\bf homotopically trivial},
i.e., any integrating characteristic line field
is homotopic to the kernel of a non-singular
closed $1$-form.

%%%%%%%%%%%%%%%%%%%%%%%%%%%%%%%%%%%
%%%%%%%%%%%%%%%%%%%%%%%%%%%%%%%%%%%

\section{Recollections\label{sec:recoll}}

%%%%%%%%%%%%%%%%%%%%%%%%%%%%%%%%%%%

\subsection{Pseudo-convexity\label{subsec:pscon}}

A real valued function $H$ on $\C^2$
is called {\bf strictly plurisubharmonic}
if the $2$-form $-\rmd\big(\rmd H\circ\rmi\big)$
is positive on complex lines,
i.e., \[-\rmd\big(\rmd H\circ\rmi\big)(v,\rmi v)>0\] for 
all non vanishing tangent vectors $v$
of $\C^2$.
This is equivalent to say
that $H\circ u$ is strictly subharmonic
for any holomorphic map $u\co D\ra\C^2$
defined on an open domain $D$ of $\C^2$,
cf.\ \cite[Section 3.1]{gz12}.

We assume that $T$ is contained
in the regular level set $M:=H^{-1}(0)$
and write $W=H^{-1}\big((-\infty,0]\big)$.
Then $\partial W=M$ as oriented manifolds.
The restriction of $-\rmd H\circ\rmi$
to the tangent bundle of $M$
defines a positive contact form on $M$
whose kernel $\xi$ equals $TM\cap\rmi TM$.
Therefore, $\xi$ is invariant under the multiplication
by $\rmi$.
The {\bf characteristic foliation} $T_{\xi}$
of $T$ is the intersection of $TT$ with $\xi$.
Because $T$ is a totally real torus
the intersection is transverse
so that $T_{\xi}$ is indeed a $1$-dimensional
foliation on $T$.
Choosing a co-orientation of $T$ in $M$
the complex orientation of $\xi$ orients $T_{\xi}$,
i.e., the leaves of $T_{\xi}$ the so-called
{\bf characteristic leaves} of $T$.

The strong {\bf maximum principle} of E.\ Hopf 
applied to $H\circ u$ yields
that any non-constant
holomorphic map $u\co(\D,\partial\D)\ra(\C^2,T)$
sends the interior of $\D$ into the interior of $W$
such that the restriction of $u$ to $\partial\D$
is an immersion positively transverse to $\xi$,
see \cite[Proposition 4.2]{gz10}.
Therefore, we can choose an orientation of $T$
such that the holomorphic discs
given by the local filling of $T$
intersect the leaves of $T_{\xi}$ positively.

%%%%%%%%%%%%%%%%%%%%%%%%%%%%%%%%%%%

\subsection{Factorizability\label{subsec:factor}}

One consequence of the maximum principle
is that all non-constant
holomorphic maps $u\co(\D,\partial\D)\ra(\C^2,T)$
do not have any mixed self-intersection points,
i.e., $u$ does not map an interior point of $\D$ to $u(\partial\D)$.
In view of Lazzarini's work \cite{la00, la11} this implies
that $u$ either is {\bf simple} or {\bf multiply covered}.
This means that there exist
\begin{itemize}
\item 
a holomorphic map
$\pi\co(\D,\partial\D)\ra(\D,\partial\D)$
that is continuous up to the boundary
and satisfies $\pi^{-1}(\partial\D)=\partial\D$ and
\item 
a holomorphic map
$v\co(\D,\partial\D)\ra(\C^2,T)$
with a dense set of points $z\in\D$
satisfying $T_zv\neq0$ and $v^{-1}\big(v(z)\big)=\{z\}$
\end{itemize}
such that $u=v\circ\pi$.
Being simple corresponds to $\pi$ having mapping degree $1$
as it is satisfied for $v$.

%%%%%%%%%%%%%%%%%%%%%%%%%%%%%%%%%%%

\subsection{Topological index\label{subsec:topin}}

Denote by $A$ a relative homotopy class of
continuous maps $u\co(\D,\partial\D)\ra(W,T)$.
The {\bf Maslov index} $\mu(A)$ of $A$ is defined to be
the Maslov index of the bundle pair
$\big(u^*TW,u^*TT\big)$
for any disc map $u$ representing $A$,
see \cite[Section C.3]{mcsa04}.
Notice, that the Maslov index $\mu\big([u]\big)$
does not change if the map $u$ is perturbed
through homotopies relative $T$ that even
take values outside $W\subset\C^2$.
In other words, $\mu(A)$ is uniquely determined
by the image $\partial_* A$ under the
boundary homomorphism $\partial_*$
that maps $\pi_2(\C^2,T)$ isomorphically
onto $\pi_1(T)$.

We remark that the Maslov index $\mu(A)$ is
an {\it even} integer for all classes $A$
because $T$ is orientable.
This is because the forgetful map
from the space of {\it oriented}
real planes through $0\in\C^2$
onto the space of {\it all}
real planes through $0\in\C^2$
has degree two,
cf.\ \cite[p.~52/53]{mcsa98}, \cite[Appendix]{ba94}
and \cite[p.~554]{mcsa04}.

%%%%%%%%%%%%%%%%%%%%%%%%%%%%%%%%%%%

\subsection{Intersection product\label{subsec:intprod}}

A smooth map $(\D,\partial\D)\ra(W,T)$
is called {\bf admissible} if it sends
$\Int\D$ into $\Int W$,
restricts to an immersion on $\partial\D$,
and is transvers to $M$.
Any non-constant holomorphic map
$(\D,\partial\D)\ra(\C^2,T)$ is admissible,
see Section \ref{subsec:pscon}.
For admissible maps
$u_1,u_2\co(\D,\partial\D)\ra(W,T)$
an {\bf intersection number}
$u_1\bullet u_2$ is defined
provided $u_1$ and $u_2$ intersect in
only finitely many points
and the span of the tangent spaces to
$u_1(\D)$ and $u_2(\D)$ at all
boundary intersection points
is not $3$-dimensional,
see \cite[Section 8]{gz10}.
We will say that $u_1$ and $u_2$
intersect {\bf nicely}.

To each interior intersection point
a local intersection multiplicity is assigned.
For boundary intersection points
one takes local intersection multiplicities
of extensions of $u_1(\D)$ and $u_2(\D)$
by local Schwarz reflections,
see \cite[p.~ 569-571]{gz10}.
By definition the intersection number $u_1\bullet u_2$
is the total sum of local intersection multiplicities,
where interior intersections are counted twice,
see \cite[Definition 8.8]{gz10}.
In \cite[Proposition 8.9]{gz10} it is shown
that $u_1\bullet u_2$ only depends
on the relative homotopy classes $[u_1]$ and $[u_2]$
in $\pi_2(W,T)$.
By \cite[Proposition 8.2]{gz10}
the condition on $u_1$ and $u_2$
to intersect nicely is generic.
Hence, the intersection number defines an
{\bf intersection product} $\bullet$ on $\pi_2(W,T)$,
see \cite[Remark 8.11]{gz10}.

%%%%%%%%%%%%%%%%%%%%%%%%%%%%%%%%%%%

\subsection{Non-negativity of intersections\label{subsec:posint}}

By \cite[Proposition 9.1]{gz10}
any two distinct holomorphic discs
$u_1,u_2\co(\D,\partial\D)\ra(\C^2,T)$
intersect nicely.
The intersection number $u_1\bullet u_2$
is non-negative
and equals zero if and only if 
$u_1$ and $u_2$ have disjoint images,
\cite[Theorem 9.2]{gz10}.

%%%%%%%%%%%%%%%%%%%%%%%%%%%%%%%%%%%

\subsection{Relative adjunction inequality\label{subsec:reladjineq}}

The {\bf embedding defect}
of a relative homotopy class $A$
in $\pi_2(W,T)$ is defined to be
\[
D(A)=A\bullet A-\mu(A)+2\;.
\]
For all simple holomorphic maps
$u\co(\D,\partial\D)\ra(\C^2,T)$
the embedding defect $D\big([u]\big)$
is non-negative and vanishes if and only if
$u$ is an embedding,
\cite[Theorem 9.4]{gz10}.

\begin{prop}
\label{fillimplm2}
  Let $F$ be a local filling of $T$.
  Then the Maslov index of $F(t,\,.\,)$ equals $2$
  for all $t$.
\end{prop}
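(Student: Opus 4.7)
The plan is to combine the relative adjunction inequality of Section \ref{subsec:reladjineq} with homotopy invariance of the intersection product, using the family $F$ itself as a homological perturbation. Each slice $u_t=F(t,\,.\,)$ is a non-constant holomorphic disc with boundary on $T$, and since $F$ is an embedding, $u_t$ is injective on $\D$. The Lazzarini factorization $u_t=v\circ\pi$ recalled in Section \ref{subsec:factor} then forces $\pi$ to have mapping degree one, so $u_t$ is simple.

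Next I would compute the self-intersection $[u_t]\bullet[u_t]$. For any $s\neq t$ in $(-\varepsilon,\varepsilon)$ the images $u_t(\D)$ and $u_s(\D)$ are disjoint, again because $F$ is an embedding, hence $u_t$ and $u_s$ intersect nicely and $u_t\bullet u_s=0$. The family $F$ provides a homotopy rel $T$ from $u_t$ to $u_s$, so $[u_t]=[u_s]$ in $\pi_2(\C^2,T)$, and the homotopy invariance of the intersection product (Proposition 8.9 of \cite{gz10}, recalled in Section \ref{subsec:intprod}) yields
\[
[u_t]\bullet[u_t]=u_t\bullet u_s=0.
\]

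Finally, since $u_t$ is simple and embedded, the embedding defect $D\big([u_t]\big)$ vanishes by the characterization of equality in the relative adjunction inequality, so
\[
0=[u_t]\bullet[u_t]-\mu\big([u_t]\big)+2=-\mu\big([u_t]\big)+2,
\]
giving $\mu\big([u_t]\big)=2$. There is no substantive obstacle in the argument: the technical core (the adjunction inequality and well-definedness of $\bullet$ on $\pi_2(W,T)$) has already been quoted from \cite{gz10}; the only thing to verify is the disjointness of distinct slices, which is immediate from injectivity of $F$. In particular, one does not need to compute the Maslov index from a trivialization of $u_t^*TW$ directly, since the adjunction identity packages this information into an intersection-theoretic equality that the embedded local filling resolves for free.
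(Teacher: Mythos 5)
Your argument is correct and follows the paper's proof almost verbatim: vanishing embedding defect because each slice is embedded, vanishing self-intersection because nearby slices of the embedded filling are disjoint, and the adjunction identity then yields $\mu=2$. The only difference is cosmetic --- you invoke Lazzarini factorization to deduce simplicity of $u_t$, which is overkill since an embedding is already simple by definition; the paper skips this.
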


\begin{proof}
 Set $u_t=F(t,\,.\,)$.
 The embedding defect $D\big([u_0]\big)$ vanishes
 because $u_0$ is an embedding.
 The intersection product $[u_0]\bullet[u_0]$
 is equal to the intersection number $u_0\bullet u_t$
 for $t>0$ small,
 which is zero as $u_0(\D)$ and $u_t(\D)$ are disjoint.
 Therefore, $\mu\big([u_0]\big)=2$.
\end{proof}

\begin{ex}
 Consider the holomorphic embedding
 $u(z)=(z,z)$, $z\in\D$, that takes boundary values
 on the Clifford torus $\partial\D\times\partial\D$ inside $\sqrt{2}\cdot S^3$.
 As $u$ has Maslov index $4$ there is no
 filling of the Clifford torus that extends $u$,
 which of course can be verified directly.
\end{ex}

\begin{rem}
 In \cite{kr91} Kruzhilin showed
 that any totally real torus
 that is contained in a regular level set
 of a strictly plurisubharmonic function
 admits a family
 of Maslov index $2$ holomorphic discs that
 are attached to the torus passing though
 all its points.
 Therefore, the total obstruction to extend to an
 analytic filling lies in the vanishing of the
 self-intersection number of the represented
 relative homology class.
\end{rem}

%%%%%%%%%%%%%%%%%%%%%%%%%%%%%%%%%%%

\subsection{Automatic transversallity\label{subsec:autotrans}}

We formulate the converse of
Proposition \ref{fillimplm2}.
For that consider a 
holomorphic embedding
$u\co(\D,\partial\D)\ra(\C^2,T)$.
We call the collection of
three pairwise disjoint local paths in $T$
that intersect $u(\partial\D)$ transversally
in a single point resp.\ the
{\bf transverse constraints}.

\begin{prop}
  \label{hofimpl}
 Let $u\co(\D,\partial\D)\ra(\C^2,T)$
 be a holomorphic embedding
 of Maslov index $2$.
 Then there exists a local filling $F$ of $T$
 that extends $u=F(0,\,.\,)$ such that
 the curves $t\mapsto F(t,\rmi^k)$,
 $k=0,1,2$, locally
 parametrize the transverse constraints.
 Moreover, the filling is unique up to
 re-parametrizations in the time variable $t$
 and shrinking the time interval.
\end{prop}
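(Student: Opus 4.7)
The plan is to realise $F$ as a one-parameter family of holomorphic discs produced by the implicit function theorem applied to a constrained Cauchy--Riemann problem, and then verify embeddedness and uniqueness using the intersection theory recalled in Section \ref{subsec:posint}.

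To kill the $PSL(2,\R)$ worth of reparametrizations of $\D$, I would mark the three boundary points $1,\rmi,-1\in\partial\D$ and consider the moduli space of holomorphic maps $v\co(\D,\partial\D)\ra(\C^2,T)$ in a suitable Banach manifold (say of class $W^{1,p}$ with $p>2$) that are $C^0$-close to $u$ and satisfy the evaluation constraints $v(\rmi^k)\in\gamma_k$, where $\gamma_0,\gamma_1,\gamma_2$ denote the three local paths of the transverse constraints. By Riemann--Roch, the linearised Cauchy--Riemann operator on sections of the bundle pair $(u^*T\C^2,u^*TT)$ has Fredholm index $n\chi(\D)+\mu\big([u]\big)=2+2=4$; imposing the three codimension-$1$ point conditions cuts this down to $1$, matching the anticipated time-parameter family.

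The decisive step is surjectivity of this constrained linearisation, an instance of automatic transversality for $J$-holomorphic discs in complex surfaces with totally real boundary: for an embedded disc with positive Maslov index in a $4$-dimensional target, the linearised real Cauchy--Riemann operator is surjective (Hofer--Lizan--Sikorav), and, since the number of imposed point constraints equals the Maslov index $\mu([u])=2$, one verifies that the joint evaluation map remains surjective when restricted to the kernel. The implicit function theorem then supplies a smooth family $v_t$, $t\in(-\varepsilon,\varepsilon)$, of holomorphic discs with $v_0=u$ whose marked boundary points trace out the $\gamma_k$. Set $F(t,z):=v_t(z)$.

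It remains to check embeddedness and uniqueness. Since $u$ is a simple embedded disc of Maslov index $2$, the embedding defect vanishes, and $[u]\bullet[u]=\mu([u])-2=0$. For $t\neq0$ small, $v_t$ lies in the same relative homotopy class as $u$, so $u\bullet v_t=0$, and by \cite[Theorem 9.2]{gz10} the images $u(\D)$ and $v_t(\D)$ are disjoint. Combined with smooth dependence on $t$, this makes $F$ an injective immersion, and hence an embedding after shrinking $\varepsilon$. Uniqueness is the uniqueness clause of the implicit function theorem: any competing local filling $F'$ can be reparametrised fibrewise by the unique element of $PSL(2,\R)$ that sends $1,\rmi,-1$ onto the $\gamma_k$; the resulting normalised family must agree with $v_t$ up to a relabelling of the time parameter, which is the asserted re-parametrisation. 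The principal obstacle is the surjectivity claim: the automatic transversality input is essentially four-dimensional and relies on positivity of the Maslov index, so propagating it through the boundary constraints requires the matching count $\mu([u])=3-1$ to be used carefully.
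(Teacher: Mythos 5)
Your outline --- constrained Cauchy--Riemann problem, implicit function theorem with automatic transversality, adjunction for embeddedness --- matches the references the paper cites (\cite[Section~2]{ho93}, \cite[Section~3.2]{ho99}, with the constraint modification as in \cite[Lemma~7.5, Proposition~7.6]{gz10}), so the strategy is the paper's own. Two points need repair.

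First, the transversality numerology is garbled. You impose \emph{three}, not two, codimension-one boundary constraints, and the phrase ``the matching count $\mu([u])=3-1$'' does not correspond to any version of automatic transversality. What one actually needs, beyond surjectivity of the unconstrained operator $D_u$ (which holds since $u$ is embedded, hence simple, with $\mu>0$ in a four-dimensional target with totally real boundary), is surjectivity of the evaluation map from $\ker D_u$ --- which is $4$-dimensional by $\operatorname{index}=4$ and surjectivity --- onto the $3$-dimensional space $\bigoplus_{k}T_{u(\rmi^k)}T/T_{u(\rmi^k)}\gamma_k$. This is where transversality of the constraints enters: the three elements of $\ker D_u$ generated by the infinitesimal $\aut(\D,\rmi)$-action evaluate at $1,\rmi,-1$ to vectors tangent to $u(\partial\D)$, the evaluation $\aut(\D,\rmi)\to\bigoplus_k T_{\rmi^k}\partial\D$ is an isomorphism (an infinitesimal M\"obius field vanishing at three boundary points vanishes identically), and each tangent line $T_{u(\rmi^k)}u(\partial\D)$ is a complement of $T_{u(\rmi^k)}\gamma_k$ in $T_{u(\rmi^k)}T$. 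No further appeal to $\mu$ is needed.

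Second, you declare $F$ to be an ``injective immersion'' after establishing only that distinct discs have disjoint images. The immersion property is the part of Hofer's argument that does the real work: you must show $\partial_t v_t(z)\notin T_{v_t(z)}v_t(\D)$ for all $z$, and disjointness of the images $v_t(\D)$ does not by itself rule out pointwise tangency of the deformation field. The standard argument splits $v_t^*T\C^2$ into tangent and normal summands along the embedded disc $v_t$; the normal component $n_t$ of $\partial_t v_t$ solves a real-linear Cauchy--Riemann equation on the normal line bundle, whose Maslov index equals $\mu([v_t])-2\chi(\D)=0$. By the Carleman similarity principle the algebraic count of zeros of $n_t$ is $0$, and $n_t\not\equiv0$ (if the normal part vanished identically, $\partial_t v_t$ would lie in the image of the $\aut(\D,\rmi)$-action, and the constraints would force $\partial_t v_t\equiv0$, contradicting the one-dimensionality of the constrained kernel). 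Hence $n_t$ vanishes nowhere, and only then is $F$ a local diffeomorphism onto its image, whereupon shrinking the time interval yields the embedding.
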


\begin{proof}
 This is worked out in \cite[Section 2]{ho93}
 and \cite[Section 3.2]{ho99}.
 The necessary modifications
 in view of the transverse constraints,
 can be achieved similarly to
 \cite[Lemma 7.5 and Proposition 7.6]{gz10}
 by a choice of a Riemannian metric that turns $T$
 and the three transverse constraints
 into totally geodesic submanifolds.
\end{proof}

%%%%%%%%%%%%%%%%%%%%%%%%%%%%%%%%%%%
%%%%%%%%%%%%%%%%%%%%%%%%%%%%%%%%%%%

\section{A moduli space\label{sec:amodsp}}

We assume the situation of Theorem \ref{thm:main}
and denote the local filling of $T$ by $F$.

%%%%%%%%%%%%%%%%%%%%%%%%%%%%%%%%%%%

\subsection{Definition\label{subsec:definition}}

Set $u_0=F(0,\,.\,)$.
Provide the space of all holomorphic maps
$u\co(\D,\partial\D)\ra(\C^2,T)$,
which by Section \ref{subsec:pscon}
take values in $W$,
with the $C^{\infty}$-topology.
The subspace consisting
of all $u$ that are homologous to $u_0$
in $W$ relative $T$ is denoted
by $\widetilde{\MM}$.
Notice, that all holomorphic discs
$u\in\widetilde{\MM}$ are simple.
To see this factor $u=v\circ\pi$ as
described in Section \ref{subsec:factor}.
Because $\mu\big([v]\big)$ is even and
Proposition \ref{fillimplm2}
the degree of the holomorphic map
$\pi$ must be one.
In particular,
the group $G$ of conformal
automorphisms of $(\D,\rmi)$
acts without fixed points
via re-parametrizations.
The {\bf moduli space} $\MM$ is defined to be
the quotient $\widetilde{\MM}/G$.

Because $D\big([u_0]\big)$ vanishes
all holomorphic maps in $\widetilde{\MM}$
are embeddings with Maslov index $2$,
cf.\ Proposition \ref{fillimplm2}.
By Section \ref{subsec:posint}
the images of holomorphic maps in $\widetilde{\MM}$
that have distinct images,
i.e., represent distinct classes in $\MM$,
are in fact disjoint
because of $[u_0]\bullet[u_0]=0$.
With the arguments from
Proposition \ref{hofimpl}
$\widetilde{\MM}$ is a $4$-dimensional,
and hence $\MM$ a $1$-dimensional,
smooth manifold.

%%%%%%%%%%%%%%%%%%%%%%%%%%%%%%%%%%%

\subsection{Geometric bounds\label{subsec:geombounds}}

Denote by $B$ a ball in $\C^2$ that contains the torus $T$.
Because $B$ is a sub-level set of the strictly plurisubharmonic
function $\mathbf{z}\mapsto\frac14|\mathbf{z}|^2$
all holomorphic discs $u(\D)$ with $u(\partial\D)\subset T$
are contained in $B$.
This again follows from the maximum principle.

%%%%%%%%%%%%%%%%%%%%%%%%%%%%%%%%%%%

\subsection{Cutting the torus\label{subsec:cuttor}}

The image of $u_0|_{\partial\D}$ is an oriented knot in $T$,
which is transverse to the characteristic leaves of $T$,
see Section \ref{subsec:pscon}.
Therefore, $u_0(\partial\D)$ can not bound a disc inside $T$.
In view of \cite[p.~ 25]{ro76}
the complement $T\setminus u_0(\partial\D)$
is diffeomorphic to $S^1\times (0,1)$.
We may assume that the boundary circles
of the holomorphic discs that belong to the local filling $F$
coincide with the slices $S^1\times\{t\}$
for $t\in(0,\varepsilon)\cup(1-\varepsilon,1)$
so that we can add two copies of $u_0(\partial\D)$
to the cylinder to get $S^1\times [0,1]$.
According to the orientation convention
in Section \ref{subsec:pscon} the leaves of $T_{\xi}$
point inwards $S^1\times [0,1]$ along $S^1\times\{0\}$.

%%%%%%%%%%%%%%%%%%%%%%%%%%%%%%%%%%%

\subsection{Energy bounds\label{subsec:enbounds}}

Denote by $\omega$ the symplectic form
$\rmd\mathbf{x}\wedge\rmd\mathbf{y}$
of $\C^2\equiv\R^4$.
The energy $E(u)$
of a holomorphic disc $u$
is defined by $\int_{\D}u^*\omega$.
Because $\rmi$ and $\omega$ are compatible
$E(u)$ is equal to the Dirichlet energy of $u$,
see \cite[Section 2.2]{mcsa04}.
Both descriptions imply
that the energy is invariant under conformal
re-parametrizations.

\begin{prop}
 \label{energest}
 The energy function $[u]\mapsto E(u)$
 on $\MM$ is bounded from above.
\end{prop}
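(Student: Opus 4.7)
The plan is to express the energy as a boundary integral via a global primitive of $\omega$ and then to control the difference $E(u)-E(u_0)$ using Stokes' theorem on a cobounding sub-annulus in the compact torus $T$.

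First, I would fix a global primitive $\lambda$ of $\omega=\rmd\mathbf{x}\wedge\rmd\mathbf{y}$ on $\C^2$, for instance $\lambda=\frac12\sum_{j=1}^2(x_j\,\rmd y_j-y_j\,\rmd x_j)$. Stokes' theorem then gives
\[
E(u)=\int_{\D}u^*\omega=\int_{\partial\D}u^*\lambda
\]
for every smooth disc $u$, so the energy is entirely determined by the boundary loop $u(\partial\D)\subset T$.

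Next, for any class $[u]\in\MM$ distinct from $[u_0]$, Proposition \ref{fillimplm2} yields $[u_0]\bullet[u_0]=0$ and hence $[u]\bullet[u_0]=0$ since $u\sim u_0$ in $\pi_2(W,T)$. Positivity of intersections (Section \ref{subsec:posint}) forces the images $u(\D)$ and $u_0(\D)$ to be disjoint; in particular $u(\partial\D)$ lies in the open cylinder $T\setminus u_0(\partial\D)\cong S^1\times(0,1)$ from Section \ref{subsec:cuttor}. The connecting homomorphism $\pi_2(W,T)\ra\pi_1(T)$ sends $[u]$ and $[u_0]$ to the same class, so $u(\partial\D)$ is homologous in $T$ to $u_0(\partial\D)$ and, being embedded in the annulus, is isotopic to its core by the classification of simple closed curves in an annulus. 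Consequently $u(\partial\D)$ cobounds with $u_0(\partial\D)$ a compact sub-annulus $D\subset T$.

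Applying Stokes' theorem on $D\subset T$ then gives
\[
E(u)-E(u_0)=\int_{u(\partial\D)}\lambda-\int_{u_0(\partial\D)}\lambda=\pm\int_D\omega|_T,
\]
the sign depending on orientations. Since $D$ is contained in the compact torus $T$, this quantity is bounded in absolute value by the total variation of $\omega|_T$ on $T$, a finite constant depending only on $T$. Hence $E$ is uniformly bounded on $\MM$. The step I expect to be most delicate is the topological claim that $u(\partial\D)$ cobounds a \emph{compact} sub-annulus with $u_0(\partial\D)$ in $T$: one must combine the disjointness of the two boundary circles with the annulus classification, taking into account the orientation convention of Section \ref{subsec:cuttor} to identify the correct cobounding region. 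Once this is in place, the bound follows immediately from Stokes' theorem and compactness of $T$.
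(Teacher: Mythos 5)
Your proof is correct and follows essentially the same route as the paper: express $E(u)-E(u_0)$ via Stokes' theorem as the integral of $\omega$ over the compact sub-annulus in $T$ cobounded by $u(\partial\D)$ and $u_0(\partial\D)$, then bound by compactness of $T$. The paper's version refines the final estimate slightly (taking the smaller of the two complementary annuli to get a $\tfrac12\max_T|f|\cdot\sigma(T)$ bound), but this is immaterial for the qualitative boundedness claim.
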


\begin{proof}
 We consider a holomorphic map $u\in\widetilde{\MM}$
 that is not a conformal re-parametri-zation of $u_0$.
 Then $u(\partial\D)$ divides 
 $T\setminus u_0(\partial\D)$,
 which is diffeomorphic to $S^1\times(0,1)$,
 into two non-empty cylindrical components,
 because the knots $u_0(\partial\D)$ and $u(\partial\D)$
 are homotopic in $S^1\times[0,1]$.
 We denote the cylinder for which $u(\partial\D)$
 is an oriented boundary component by $C$.
 Therefore, by Stokes theorem
 \[
 \int_{C}\omega=
 \int_{u(\partial\D)}\!\!\lambda-
 \int_{u_0(\partial\D)}\!\!\lambda
 =E(u)-E(u_0)
 \]
 writing $\lambda$ instead of $\mathbf{x}\rmd\mathbf{y}$.
 Choose an area form $\sigma$ on $T$
 so that $\omega=f\sigma$ for a smooth function
 $f$ on the $2$-torus $T$.
 Therefore,
 \[
 \int_{C}\omega\leq
 \max_{C}|f|\cdot\sigma(C)\;,
 \]
 where we denote the total area of a subset
 $U\subset T$ by $\sigma(U)$.
 Combining both expressions we obtain
 \[
 E(u)\leq E(u_0)+
 \max_{C}|f|\cdot\sigma(C)\;.
 \]
 Repeating the argument with $C$ replaced by $T\setminus C$
 we eventually obtain
 \[
 E(u)\leq E(u_0)+
 \tfrac12\max_{T}|f|\cdot\sigma(T)
 \]
 because the minimum of two real numbers
 is smaller than the arithmetic mean.
\end{proof}

%%%%%%%%%%%%%%%%%%%%%%%%%%%%%%%%%%%

\subsection{Compactness\label{subsec:cptness}}

Any sequence in $\widetilde{\MM}$
has a subsequence that Gromov converges 
to a stable holomorphic disc $\mathbf{u}$,
which represents the relative homotopy class $[u_0]$,
see \cite[Theorem 1.1]{fz15}.
By Liouville's theorem $\mathbf{u}$
has no spherical components.

\begin{prop}
 \label{prop:cptness}
 $\MM$ is compact.
\end{prop}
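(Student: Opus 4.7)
The plan is to apply Gromov compactness to a sequence $[u_n]\in\MM$ to extract a stable-disc limit $\mathbf{u}$ in the relative homotopy class $[u_0]$, and then to show that no non-trivial disc bubbling can occur, so that the limit is again a single embedded Maslov-index-two disc. Most of the work for Gromov compactness is already in place: the geometric bound of Section \ref{subsec:geombounds} confines all boundaries, hence all images, to a fixed ball in $\C^2$, and the uniform energy bound of Proposition \ref{energest} is available. Sphere components of $\mathbf{u}$ are already ruled out by Liouville's theorem, and the stability condition on $\mathbf{u}$ forces any constant ghost component to carry enough nodal special points to fix its automorphism group.

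Let $v_1,\ldots,v_k$ be the non-constant disc components of $\mathbf{u}$, factor each as $v_i=w_i\circ\pi_i$ via Section \ref{subsec:factor} with $w_i$ simple and $\pi_i$ of degree $d_i\geq 1$, and group according to image to obtain pairwise image-distinct simple discs $w_1,\ldots,w_\ell$ with total multiplicities $M_1,\ldots,M_\ell\geq 1$. Additivity of the Maslov index together with Proposition \ref{fillimplm2} yields
\[
\sum_{j=1}^{\ell}M_j\,\mu\bigl([w_j]\bigr)=\mu\bigl([u_0]\bigr)=2.
\]

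The step I expect to be the main obstacle is the positivity claim that $\mu([w])\geq 2$ for every non-constant simple holomorphic disc $w\co(\D,\partial\D)\ra(\C^2,T)$. The strong maximum principle recalled in Section \ref{subsec:pscon} forces $w|_{\partial\D}$ to be positively transverse to the contact structure $\xi$ on $M$; the task is to convert this contact-geometric positivity into topological positivity of the Maslov index. I would carry this out via a Bennequin-type bound for the positively transverse knot $w(\partial\D)$ in the tight contact three-manifold $(M,\xi)$, combined with the standard identification of $\mu([w])$ with twice the self-linking of $w(\partial\D)$ relative to $w$; the evenness of $\mu$ from Section \ref{subsec:topin} then upgrades the resulting strict positivity to $\mu([w])\geq 2$.

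Once positivity is in hand, each summand on the left of the displayed identity is bounded below by $2M_j\geq 2$, so the identity forces $\ell=1$, $M_1=1$, and $\mu([w_1])=2$: the limit $\mathbf{u}$ has a unique non-trivial component $w_1$, which is simple and represents $[u_0]$. The embedding-defect computation of Proposition \ref{fillimplm2} applies to $w_1$ verbatim, since $[w_1]\bullet[w_1]=[u_0]\bullet[u_0]=0$, showing that $w_1$ is an embedding and hence lies in $\widetilde{\MM}$. Ghost components cannot be attached stably to a tree containing only one non-constant disc, so none appear, and passing to the quotient by $G$ identifies $[\mathbf{u}]=[w_1]$ as the desired limit of $[u_n]$ in $\MM$, completing the proof modulo the Maslov positivity claim.
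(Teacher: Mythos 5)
Your plan coincides with the paper's up to the point where the Gromov limit is decomposed into simple image-distinct components $w_j$ with multiplicities $M_j$ and the Maslov index identity $\sum_j M_j\,\mu([w_j])=2$ is extracted. From there you diverge: you propose to finish by proving that $\mu([w])\geq 2$ for \emph{every} non-constant simple holomorphic disc with boundary on $T$, which would immediately force $\ell=1$ and $M_1=1$. That positivity claim is not established, and the sketch you give for it is not the right tool. The natural relationship between the Maslov index and the boundary knot in this setting is precisely the relative adjunction formula recalled in Section~\ref{subsec:reladjineq}: for a simple disc $w$, $\mu([w])\leq[w]\bullet[w]+2$ with equality iff $w$ is embedded. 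So Maslov positivity for embedded simple discs is \emph{equivalent} to $[w]\bullet[w]\geq 0$, which is not automatic — indeed the self-intersection can a priori be negative. A Bennequin-type bound for $w(\partial\D)$ as a transverse knot in $(M,\xi)$ controls the self-linking relative to a Seifert surface \emph{inside} $M$, whereas the Maslov index is governed by the framing coming from the disc $w(\D)\subset W$; identifying $\mu$ with ``twice the self-linking of $w(\partial\D)$ relative to $w$'' is not a standard fact and conflates two different framings. In short, the step you yourself flag as the main obstacle is a genuine gap, not a routine citation.

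The paper avoids this claim entirely. After obtaining $[u_0]=\sum_j m_j[v^j]$ it argues by contradiction: assuming $N\geq 2$, it uses $[u_0]\bullet[u_0]=0$ together with non-negativity of intersections (Section~\ref{subsec:posint}) and connectedness of the bubble tree to force $[v^j]\bullet[v^j]\leq -1$ for all $j$, then combines the Maslov identity with the adjunction inequality to pin down some $v^1$ with $[v^1]\bullet[v^1]=-1$, and finally rules this out by a parity argument in $H_1\big(T\setminus u_0(\partial\D)\big)$: odd boundary intersection number would force $[u_0|_{\partial\D}]\cdot[u_0|_{\partial\D}]\neq 0$, contradicting the existence of disjoint nearby boundary circles from the local filling. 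This last topological step is exactly what substitutes for the Maslov positivity you are assuming, and it crucially uses that $u_0$ is part of a local filling and that $u_0(\partial\D)$ cuts $T$ into an annulus. If you wish to salvage your route, you would need to actually prove $\mu([w])\geq 2$ — most plausibly by running the paper's intersection-theoretic argument anyway, which defeats the purpose of the shortcut.
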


\begin{proof}
 We consider a sequence in $\widetilde{\MM}$.
 We can assume that the boundary circles
 of the holomorphic discs stay in the complement
 of $F\big((-\varepsilon/2,\varepsilon/2)\times\partial\D\big)$.
 Denote by $u^1,\ldots,u^N$ the components of
 a limiting stable holomorphic disc
 of a Gromov converging subsequence.
 We conclude that the circles $u^j(\partial\D)$
 are contained in the complement of $u_0(\partial\D)$,
 i.e., in $S^1\times(0,1)$.
 We will show that $N=1$
 so that the chosen subsequence descends to
 a converging sequence in $\MM$.
 
 As described in Section \ref{subsec:factor}
 each of the holomorphic discs $u^j$
 factors through a simple holomorphic disc $v^j$
 via a branched covering map of degree $m_j\geq1$.
 Therefore, $[u_0]$ equals
 \[
 [u_0]=\sum_{j=1}^Nm_j[v^j]
 \]
 in $\pi_2(W,T)$.
 We can assume that the images of the $v^j$'s
 are pairwise distinct.
 If not we combine any pair of classes
 that represent $v^j$'s with common image
 to a single class weighted with the sum of
 the multiplicities.
 This procedure shrinks $N$ and enlarges
 the $m_j$'s.
 
 We will utilize the argument on \cite[p.~ 549/50]{gz13}
 and assume by contradiction that $N\geq2$.
 Because $[u_0]\bullet[u_0]=0$
 we get with Section \ref{subsec:posint}
 that $[u_0]\bullet[v^j]=0$ for all $j$.
 Substituting the above expression for $[u_0]$
 once more we get for all $j$
 \[
 0=\sum_{k=1}^Nm_k[v^k]\bullet[v^j]
 \]
 or equivalently
 \[
 -m_j[v^j]\bullet[v^j]=\sum_{k\neq j}m_k[v^k]\bullet[v^j]\;.
 \]
 At least two of the $v^j$'s,
 which originate from the bubble tree of the limiting
 stable holomorphic disc,
 must intersect.
 Section \ref{subsec:posint} yields
 that the right hand side is positive.
 Therefore, $[v^j]\bullet[v^j]\leq-1$ for all $j$.
 
 Because $\mu\big([u_0]\big)=2$
 we get furthermore
 \[
 2=\sum_{j=1}^Nm_j\mu\big([v^j]\big)\;.
 \]
 Moreover,
 by Section \ref{subsec:reladjineq}
 the embedding defect
 \[0\leq D\big([v^j]\big)=[v^j]\bullet[v^j]-\mu\big([v^j]\big)+2\]
 of the $[v^j]$'s is non-negative.
 Combining both yields
 \[
 2\leq\sum_{j=1}^Nm_j\Big([v^j]\bullet[v^j]+2\Big)\;.
 \]
 We conclude that at least one of the $v^j$'s,
 $v^1$ say, has self-intersection number
 equal to $[v^1]\bullet[v^1]=-1$.
 
 We can approximate
 $v^1$ by a smooth admissible map
 $w\co(\D,\partial\D)\ra(W,T)$
 that represents $[v^1]$ and intersects $v^1$
 transversely in a finite number of points,
 see \cite[Remark 8.6]{gz10}
 and Section \ref{subsec:intprod}.
 Hence, $v^1\bullet w=-1$.
 The contributions from interior intersections
 to the intersection number $v^1\bullet w$
 is even because they are counted twice.
 Therefore,
 $v^1(\partial\D)$ and $w(\partial\D)$
 intersect in an {\it odd} number of points,
 which are transverse intersections by construction.
 This implies that the ordinary intersection product
 \[
 \big[v^1|_{\partial\D}\big]\cdot\big[v^1|_{\partial\D}\big]
 =
 \big[v^1|_{\partial\D}\big]\cdot\big[w|_{\partial\D}\big]
 \neq0
 \]
 does not vanish
 on the first homology of $T\setminus u_0(\partial\D)$.
 In other words
 $\big[v^1|_{\partial\D}\big]\neq0$,
 so that the classes of $v^1|_{\partial\D}$
 and $u_0|_{\partial\D}$ are non-trivially co-linear
 in
 $H_1\big(T\setminus u_0(\partial\D)\big)$,
 the latter being identified with
 $\Z\big[u_0|_{\partial\D}\big]$.
 We infer that
 \[
 \big[u_0|_{\partial\D}\big]\cdot\big[u_0|_{\partial\D}\big]
 \neq0\;.
 \]
 This is a contradiction as $u_0(\partial\D)$
 and $F(\varepsilon/2,\partial\D)$ are disjoint.
 Hence, $N=1$.
\end{proof}

%%%%%%%%%%%%%%%%%%%%%%%%%%%%%%%%%%%
%%%%%%%%%%%%%%%%%%%%%%%%%%%%%%%%%%%

\section{Extensions of local fillings\label{sec:exoflocfill}}

%%%%%%%%%%%%%%%%%%%%%%%%%%%%%%%%%%%

\subsection{Holomorphic discs with one boundary
 marked point\label{subsec:markedpoit}}

Consider the quotient space
\[
\MM^1=\widetilde{\MM}\times_G\partial\D
\]
by the action
\[
g*(u,z)=\big(u\circ g, g^{-1}(z)\big)\;.
\]
By Section \ref{subsec:definition}
and Proposition \ref{prop:cptness}
the moduli space
$\MM^1$ is a closed surface.
Charts can be obtained via local fillings
as described in Proposition \ref{hofimpl}.
For any $u\in\widetilde{\MM}$ and
transverse constraints $c_0,c_1,c_2$
that intersect $u(\partial\D)$
in $u(1),u(\rmi),u(-1)$, resp.,
there exists a local filling
$F\co(-\varepsilon,\varepsilon)\times\D\ra\C^2$
that extends $u=F(0,\,.\,)$.
The local filling is 
uniquely determined up to
re-parametrizations in time.
The map
$(-\varepsilon,\varepsilon)\times\partial\D\ra\MM^1$
given by
$(t,z)\mapsto\big[F(t,\,.\,),z\big]$
is a local parametrization near $(u,z)$.
In order to obtain coordinate changes
consider local fillings $F_1$ and $F_2$
that extend $u$ and $u\circ h$,
$h\in G$, resp.
By shrinking the time intervals
we can assume that the images coincide.
Therefore, we obtain a diffeomorphism
\[
F_2^{-1}\circ F_1=(f,g)\co
(-\varepsilon_1,\varepsilon_1)\times\D
\lra
(-\varepsilon_2,\varepsilon'_2)\times\D\;,
\]
where $f\co(-\varepsilon_1,\varepsilon_1)\ra
(-\varepsilon_2,\varepsilon'_2)$ is a smooth
strictly increasing function and
$g=g_t$, $t\in(-\varepsilon_1,\varepsilon_1)$,
a smooth $1$-parameter family of
conformal automorphisms in $G$
such that $f(0)=0$ and $g_0=h^{-1}$,
see \cite[Theorem 18]{ho93} and
\cite[Proposition 3.12]{ho99}.
The desired coordinate change
$(-\varepsilon_1,\varepsilon_1)\times\partial\D
\ra
(-\varepsilon_2,\varepsilon'_2)\times\partial\D$
according to the parametrizations
$F_1$ and $F_2$
spells out
as $(t,z)\mapsto\big(f(t),g_t(z)\big)$.
In particular, the evaluation map
\[
\ev\co\MM^1\lra T\;,\qquad
[u,z]\longmapsto u(z),
\]
equals $(t,z)\mapsto F(t,z)$
in the chart obtained by $F$.
Because $\ev$
is injective by Section \ref{subsec:posint}
and Section \ref{subsec:reladjineq}
the evaluation map $\ev$
is a diffeomorphism.

%%%%%%%%%%%%%%%%%%%%%%%%%%%%%%%%%%%

\subsection{No trapped characteristics on
 the cut off torus\label{subsec:nontrapping}}

We identify $T\setminus u_0(\partial\D)$ with
$S^1\times (0,1)$ according to the conventions
in Section \ref{subsec:cuttor}.

\begin{prop}
\label{through}
 Each characteristic leaf of $S^1\times (0,1)$
 connects $S^1\times\{0\}$
 with $S^1\times\{1\}$.
\end{prop}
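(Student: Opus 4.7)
The plan is to exploit the diffeomorphism $\ev\co\MM^1\lra T$ established at the end of Section \ref{subsec:markedpoit}: it implies that $T$ is foliated by the boundary circles $u(\partial\D)$ of holomorphic discs $u\in\widetilde{\MM}$. Since $T$ is connected, so is $\MM^1$, and hence the base $\MM$ is a compact connected $1$-manifold, that is, a circle. In the identification $T\setminus u_0(\partial\D)\cong S^1\times(0,1)$ from Section \ref{subsec:cuttor}, each slice $S^1\times\{t\}$ with $t\in(0,1)$ is then the boundary circle of some holomorphic disc in $\widetilde{\MM}$ distinct from $u_0$.

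Every such boundary circle is positively transverse to the characteristic foliation $T_{\xi}$ by the maximum principle recalled in Section \ref{subsec:pscon}, and with the orientation convention from Section \ref{subsec:cuttor} the oriented leaves of $T_{\xi}$ cross each slice in the direction of increasing $t$. Consequently, along any connected component $\gamma$ of a characteristic leaf in $S^1\times(0,1)$ the second coordinate is strictly increasing, so $\gamma$ may be parametrized as $\gamma(t)=(\theta(t),t)$ on a maximal open interval $(t_-,t_+)$ with $0\leq t_-<t_+\leq 1$.

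It remains to prove that $t_+=1$ and, symmetrically, $t_-=0$, which one obtains by a flow-box extension. Assume for contradiction $t_+<1$. By compactness of $S^1$ one may choose $\theta_+\in S^1$ and a sequence $t_n\nearrow t_+$ with $\theta(t_n)\to\theta_+$. A flow box for $T_{\xi}$ around $(\theta_+,t_+)\in S^1\times(0,1)$ exhibits the leaves of $T_{\xi}$ as graphs of smooth functions of $t$ on an interval $(t_+-\delta,t_++\delta)\subset(0,1)$. For large $n$ the point $\gamma(t_n)$ lies in the flow box, so $\gamma$ agrees there with one such graph, which extends smoothly across $t=t_+$ while staying inside $S^1\times(0,1)$. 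This extension is connected to $\gamma$ through $\gamma(t_n)$ and therefore belongs to the same connected component, contradicting the maximality of $(t_-,t_+)$.

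The essential input is the diffeomorphism property of $\ev$: without it one could not guarantee that every interior slice is a transverse section of $T_{\xi}$ and so could not rule out characteristic leaves asymptoting to interior slices. Once each interior slice is known to be a transverse section, the monotonicity and flow-box extension make the conclusion routine.
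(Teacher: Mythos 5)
The argument has a gap at the step where you claim that, in the identification $T\setminus u_0(\partial\D)\cong S^1\times(0,1)$ from Section \ref{subsec:cuttor}, each interior slice $S^1\times\{t\}$ is the boundary circle of a holomorphic disc in $\widetilde{\MM}$. The identification in Section \ref{subsec:cuttor} is only arranged so that the slices for $t\in(0,\varepsilon)\cup(1-\varepsilon,1)$ coincide with boundary circles coming from the given local filling; nothing is said about the slices in the middle of the cylinder. The diffeomorphism $\ev\co\MM^1\ra T$ tells you that boundary circles foliate $T$, not that this foliation coincides with the chosen product foliation. Without that alignment, you cannot conclude that the $t$-coordinate is monotone along a characteristic leaf, and the flow-box extension then has nothing to extend. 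The gap is easy to close: since $\ev$ realizes $T$ as the total space of the circle bundle $\MM^1\ra\MM\cong S^1$ whose fibers are exactly the boundary circles, cutting $T$ along the fiber $u_0(\partial\D)$ gives a \emph{new} identification $T\setminus u_0(\partial\D)\cong S^1\times(0,1)$ in which every slice \emph{is} a boundary circle, compatible with the Section \ref{subsec:cuttor} convention near the ends. With this adjustment the monotonicity statement holds, and your flow-box argument goes through.

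Once corrected, your route is genuinely different from the paper's. You first pass to the global fact that the boundary circles form a product circle foliation of $T$, deduce monotonicity of the transverse coordinate along characteristic leaves, and finish by a flow-box extension at a hypothetical interior endpoint. The paper instead argues locally along a fixed leaf $\ell$ emanating from $S^1\times\{0\}$: it covers $\ell$ by the images of local fillings anchored at points of $\ell$, takes the supremum $s_0$ of heights reached by this cover over $\{0\}\times(0,1)$, and observes that the local filling through $(0,s_0)$ must overlap the cover and — because $\ell$ crosses each boundary circle in the overlap positively — push $s_0$ strictly higher, forcing $s_0=1$. The paper's version only needs that every point of $T$ lies on a boundary circle carrying a local filling; your version additionally uses the global triviality of the boundary-circle foliation. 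Both ultimately rest on the evaluation diffeomorphism from Section \ref{subsec:markedpoit}.
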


\begin{proof}
  Choose a characteristic leaf
  $\ell$ on $S^1\times (0,1)$
  that intersects $S^1\times\{0\}$.
  For each point $p\in\ell$
  there exists a local filling $F_p$
  such that $F_p(0,\partial\D)$
  intersects $p$,
  see Section \ref{subsec:markedpoit}.
  Denote by
  \[
  U=\bigcup_{p\in\ell}
  F_p\Big(
  (-\varepsilon_p,\varepsilon_p)
  \times\partial\D
  \Big)
  \]
  the union of the images of all local fillings
  $F_p$, $p\in\ell$, in $S^1\times (0,1)$.
  Denote by $s_0$ the supremum of
  \[
  \proj_2\Big(U\cap\big(\{0\}\times(0,1)\big)\Big)
  \]
  in the unit interval $(0,1)$.
  Then there exists a local filling $F_0$
  such that $F_0(0,\partial\D)$
  intersects the point $(0,s_0)$ in $S^1\times (0,1)$.
  The image of $F_0$ in
  $S^1\times (0,1)$ overlaps with $U$.
  Because $\ell$ intersects boundary circles
  of holomorphic discs transversally
  each boundary circle of each
  holomorphic disc $F_0(t,\,.\,)$
  intersects $\ell$ non-trivially.
  Hence, $s_0=1$ and $F_0(0,\partial\D)$
  coincides with $S^1\times\{1\}$
  so that
  $\ell$ connects $S^1\times\{0\}$
  with $S^1\times\{1\}$.
\end{proof}

%%%%%%%%%%%%%%%%%%%%%%%%%%%%%%%%%%%

\subsection{Gluing local fillings\label{subsec:guingloc}}

We consider a local filling
$F\co(-\varepsilon,\varepsilon)\times\D\ra\C^2$
of $T$ and set $u_0=F(0,\,.\,)$.
In view of Proposition \ref{through}
we choose three disjoint smooth knots
$K_0,K_1,K_2$ in $T$
such that each coincides
with a connected characteristic
leaf on $S^1\times[\varepsilon,1-\varepsilon]$
and is transverse to the boundary circles
of $F(t,\,.\,)$,
whose images correspond to the slices in
$S^1\times\big((0,\varepsilon)\cup(1-\varepsilon,1)\big)$.
Therefore, each holomorphic disc
$u\in\widetilde{\MM}$ intersects each knot 
$K_k$ transversely.
We can assume that $u_0(\rmi^k)\in K_k$
for $k=0,1,2$.

Let $\MM_{1,\rmi,-1}$ be the moduli space
of all $u\in\widetilde{\MM}$
such that 
in $u(1)\in K_0,u(\rmi)\in K_1,u(-1)\in K_2$.
Requiring the three marked points
to lie on the respective knots,
which play the role of {\it global}
transverse constraints,
is the same as to build
the abstract quotient $\MM$.
A variant of the considerations
in Section \ref{subsec:markedpoit}
that ignores the marked point
shows that
\[
\MM_{1,\rmi,-1}\lra\MM\;,\qquad
u\longmapsto [u]\;,
\]
is a diffeomorphism.
Therefore, $\MM_{1,\rmi,-1}$
is a circle,
which of course can be seen directly with
Proposition \ref{hofimpl} and
\cite[Proposition 7.1]{gz10}.

The evaluation map
\[
\ev\co\MM_{1,\rmi,-1}\times\partial\D
\lra T\;,\qquad
(u,z)\longmapsto u(z)\;,
\]
is a diffeomorphism,
as it factors though the diffeomorphism
\[
\MM_{1,\rmi,-1}\times\partial\D\lra\MM^1\;,\qquad
(u,z)\longmapsto [u,z]\;,
\]
and the evaluation map we considered earlier,
see Section \ref{subsec:markedpoit}.
By construction $\ev_1=\ev(\,.\,,1)$
maps $\MM_{1,\rmi,-1}$ diffeomorphically
onto the knot $K_0$.
Therefore,
choosing a regular parametrization
$c\co S^1\ra K_0$
the map
\[
S^1\times\D\lra\C^2\;,\qquad
(t,z)\longmapsto\Big(\ev_1^{-1}\!\big(c(t)\big)\Big)(z)\;,
\]
turns out to be an analytic filling of $T$
cf.\ \cite[Proposition 5.2]{gz10}.

\begin{proof}[{\bf Proof of Theorem \ref{thm:main}}]
 Denote by $F_1$ the global analytic filling just obtained.
 The given local filling is denoted by $F_2$.
 Then $F_2^{-1}\circ F_1$ has the form $(f,g)$
 as indicated in Section \ref{subsec:markedpoit}.
 This time $f$ sends $(a,b)$ to $(-\varepsilon,\varepsilon)$,
 where we can assume that $a\in(-\pi,0)$ and $b\in(0,\pi)$.
 The path $g=g_t$ in $G$ is parametrized by $t\in(a,b)$.
 Replace $f$ by a smooth strictly increasing function
 that is the identity near $0$ and coincides with
 the old near $\pm\varepsilon$.
 Similarly, replace the path $g_t$
 by a path in $G$ that is constantly equal to the identity
 near $0$ and coincides with the old path
 near $\pm\varepsilon$.
 This is possible because the group $G$
 of M\"obius transformations
 on $\D$ is diffeomorhic to $S^1\times\R$
 and, hence, connected.
 The analytic filling that equals
 $F_2\circ (f,g)$ on $(a,b)\times\D$
 and $F_1$ elsewhere on $S^1\setminus(a,b)\times\D$
 is an extension after restriction of $F_2$.
\end{proof}

%%%%%%%%%%%%%%%%%%%%%%%%%%%%%%%%%%%
%%%%%%%%%%%%%%%%%%%%%%%%%%%%%%%%%%%

\section{Generalizations\label{sec:gener}}

Theorem \ref{thm:main} remains valid if we
replace $\C^2$ by any {\bf Stein surface},
which is a complex $2$-dimensional complex
manifold that admits a proper holomorphic
embedding into a complex affine space,
cf.\ \cite[p.~283/4]{gei08}.
In fact the integrability of the almost complex
structure is not used in the proof.
The theorem therefore can be phrased in
the following form:

Let $(M,\xi)$ be a closed oriented contact $3$-manifold
with a {\bf positive contact form} $\alpha$,
i.e., $\alpha\wedge\rmd\alpha>0$.
Let $(W,\omega)$ be a {\bf weak filling} of $(M,\xi)$,
i.e., a compact symplectic manifold,
which is oriented by $\omega^2$,
such that $\partial W=M$ as oriented manifolds
and the restriction of $\omega$ to $\xi$
is positive.
Assume that the symplectic manifold admits an almost
complex structure $J$ that
turns $T\subset M$ into a totally totally real torus,
leaves $\xi$ and its symplectic orthogonal invariant,
and is {\bf tamed} by $\omega$,
i.e., $\omega$ is positive on $J$-complex lines.
In particular, the boundary $M$
is $J$-{\bf convex}, i.e., the complex tangencies
$\xi$ define a positive contact structure,
see \cite[p.~538]{gz13} and \cite[Remark 4.3]{gz10}.

Assume that $T$ admits a local filling $F$.
Then $F$ extends to a global filling of $T$
after restriction, either if $H_2W$ has no
spherical classes $A$ with $\omega(A)>0$
and $A\cdot A=-1$, or $J$ is perturbed away
from $F\big([-\varepsilon/2,\varepsilon/2]\times\D\big)$
to be generic for precisely those classes
and $(W,\omega)$ is {\bf minimal},
i.e., does not admit any embedded symplectic
sphere of self-intersection equal to $-1$.
Indeed, if $J$ turns all simple stable holomorphic discs
that have an {\bf exceptional sphere}
component as described
into a regular moduli space problem
no further assumption on $W$ are necessary.

The proof is a variation of the one we gave.
In order to get uniform energy bounds
replace the primitive $\lambda$ in Proposition \ref{energest}
by a $3$-chain that has boundary
$u(\D)+C-u_0(\D)$,
which exists by assumption.
Moreover, combine the intersection argument
from Proposition \ref{prop:cptness}
with the one from
\cite[p.~ 549/50]{gz13}, and from
\cite[p.~276]{gz12} in the generic case described at last,
in order to prove compactness.

%%%%%%%%%%%%%%%%%%%%%%%%%%%%%%%%%%%
%%%%%%%%%%%%%%%%%%%%%%%%%%%%%%%%%%%

\section{From filling to isotopy\label{sec:filliso}}

We consider a totally real torus $T\subset\C^2$
which is provided with an analytic filling
$F\co S^1\times\D\ra\C^2$.
We write $u_t$ instead of $F(t,\,.\,)$
for all $t\in S^1$
and parametrize the Clifford torus by
\[
S^1\times\partial\D\lra\C^2\;,\qquad
\big(t,\rme^{\rmi\theta}\big)\longmapsto
\big(\rme^{\rmi t},\rme^{\rmi\theta}\big)\;.
\]
Corollary \ref{cor:realiso} will follow from the following:

\begin{prop}
\label{prop:realisot}
 There exists an isotopy $\Phi$
 of embeddings $S^1\times\partial\D\ra\C^2$
 that connects $\Phi_0=F$ with the chosen
 parametrization $\Phi_1$ of the Clifford torus
 such that the image tori $\Phi_s(S^1\times\partial\D)$
 are totally real for all $s\in[0,1]$.
\end{prop}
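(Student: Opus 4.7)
The plan is to construct the isotopy by concatenating three sub-isotopies: first shrink the analytic filling radially to a thin tube around its core circle $\gamma(t):=F(t,0)$; then slide $\gamma$ through embedded circles in $\C^2$ to the standard Clifford core $t\mapsto(\rme^{\rmi t},0)$, carrying a thin totally real tube with it; finally expand that thin tube to the full Clifford torus.

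For the first sub-isotopy I set
\[
\Phi^{(1)}_s(t,\rme^{\rmi\theta}) := F\bigl(t,(1-s(1-\delta))\,\rme^{\rmi\theta}\bigr)
\]
for a small fixed $\delta>0$, so the image at time $s$ is the interior torus $F(S^1\times r\partial\D)$ with $r=1-s(1-\delta)\in[\delta,1]$. The central observation is that every such interior torus is totally real. At the point $F(t,r\rme^{\rmi\theta})$ its tangent plane is $\mathrm{span}_\R(v_1,v_2)$ with $v_1=\partial_tF$ and $v_2=\rmi r\rme^{\rmi\theta}F_z$; holomorphicity of $u_t$ places $v_2$ and hence $\rmi v_2$ into the complex tangent line $L_t:=Tu_t(\D)$ at the point, while $F$ being a smooth embedding of a $3$-manifold forces $v_1\notin L_t$, so $\C v_1$ and $L_t$ are two distinct complex lines in $\C^2$. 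Their real sum $\C v_1+L_t$ is therefore all of $\C^2$ and contains $v_1,v_2,\rmi v_1,\rmi v_2$, which proves the tangent plane is totally real.

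For the second sub-isotopy I pick a smooth isotopy $\gamma_s\co S^1\hookrightarrow\C^2$ from $\gamma_0=\gamma$ to $\gamma_1(t)=(\rme^{\rmi t},0)$ through embedded circles, available because embedded smooth circles in $\R^4$ are ambient isotopic, together with a continuous family of complex lines $L_s(t)\subset T_{\gamma_s(t)}\C^2$ transverse to $\R\dot\gamma_s(t)$, interpolating from $L_0(t)=\C F_z(t,0)$ to $L_1(t)=\{0\}\times\C$. Such a family exists because the space of complex lines through a point transverse to a given real line is $\C P^1$ minus one point, hence contractible, and complex line bundles over the cylinder $[0,1]\times S^1$ are trivial, so $L_s$ admits a continuous unit section $e(s,t)$. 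Setting
\[
\Phi^{(2)}_s(t,\rme^{\rmi\theta}):=\gamma_s(t)+\delta\bigl(\cos\theta\,e(s,t)+\sin\theta\,\rmi e(s,t)\bigr)
\]
for $\delta$ sufficiently small (uniformly in $s$) produces a smooth family of embedded totally real tori, the total reality check being the same as in step one with $\dot\gamma_s$ and $L_s(t)$ in the roles of $v_1$ and $L_t$. A short straight-line interpolation through $C^1$-close totally real tori joins $\Phi^{(1)}_1$ to $\Phi^{(2)}_0$, by choosing $e(0,t)$ proportional to $F_z(t,0)$ and using the Taylor expansion $F(t,\delta\rme^{\rmi\theta})=\gamma(t)+\delta\rme^{\rmi\theta}F_z(t,0)+O(\delta^2)$ together with the openness of total reality in the $C^1$ topology.

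For the third sub-isotopy, having arranged $e(1,t)=(0,1)$ so that $\Phi^{(2)}_1(t,\rme^{\rmi\theta})=(\rme^{\rmi t},\delta\rme^{\rmi\theta})$, I interpolate by $\Phi^{(3)}_r(t,\rme^{\rmi\theta})=(\rme^{\rmi t},r\rme^{\rmi\theta})$ for $r\in[\delta,1]$. Each of these tori is manifestly totally real since the tangent plane $\mathrm{span}_\R((\rmi\rme^{\rmi t},0),(0,\rmi r\rme^{\rmi\theta}))$ and its multiplication by $\rmi$ together span all of $\C^2$. Concatenating the three stages after a smooth reparametrization of $s\in[0,1]$ yields the desired $\Phi$. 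The main technical obstacle is keeping total reality uniform across the transitions; this is controlled by the key observation from step one, that a real $2$-plane contained in $\C v_1+L$ for distinct complex lines $\C v_1\ne L$ and spanned by an $L$-tangent circle direction together with a direction outside $L$ is automatically totally real, a condition that is both open and directly verifiable at each stage.
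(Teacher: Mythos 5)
Your proof takes essentially the same route as the paper's: shrink $T$ along the filling to a thin tube about the core circle $\gamma=F(\cdot,0)$, slide that tube along an isotopy of circles supplied by the unknotting theorem together with a complementary family of complex normal directions, and finally expand to the Clifford torus, using openness of total reality in the Grassmannian for the transition step. Your explicit verification that every interior torus $F(S^1\times r\partial\D)$, $0<r<1$, is totally real --- because $\partial_t F\notin L_t=\C\,\partial_z u_t$ by the immersion condition, whence $\C\,\partial_t F\oplus L_t=\C^2$ --- is a clean detail that the paper leaves implicit, and your family of complex lines $L_s(t)$ with unit section $e$ plays exactly the role of the paper's splitting $E=E_1\oplus E_2$ with its chosen complex trivialization.

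One point deserves flagging, although it afflicts the paper's argument as much as yours: you need the nonvanishing section $e$ of the complex line bundle $L$ over the cylinder $[0,1]\times S^1$ to satisfy simultaneously $e(0,t)\propto F_z(t,0)$ and $e(1,t)=(0,1)$. Triviality of $L$ gives \emph{some} nonvanishing section, but prescribing a nonvanishing section on both boundary circles is an extension problem with a $\Z$-valued winding obstruction, and this winding is not automatically zero. Concretely, the reparametrized Clifford filling $F(t,z)=(\rme^{\rmi t},\rme^{\rmi k t}z)$, $k\in\Z$, is a perfectly good analytic filling of the Clifford torus for which $F_z(t,0)=(0,\rme^{\rmi kt})$ has winding $k$ relative to the constant $(0,1)$, so for $k\neq0$ the prescribed $e$ does not extend over the cylinder, and the $C^1$-close convex interpolation joining the shrunk filling tube to the sliding tube cannot be made through tangent planes staying uniformly close. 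The same mismatch makes the paper's claim that both families of tangent planes ``converge to the real plane spanned by $X_{(0,t)}$ and $T_{(t,0;0)}\Phi_0(0,\rmi\rme^{\rmi\theta})$'' fail for such $F$. To close the argument one needs an additional step that accounts for this framing discrepancy (for instance, showing the parametrization twist $\psi_k(t,\theta)=(t,kt+\theta)$ is realized by an ambient isotopy through totally real tori), but since this gap is present in the source you were blind to, it is not a defect you introduced.
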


\begin{proof}
 By the unknotting theorem
 \cite[Corollary 7.2]{ad93}
 the embeddings $F(\,.\,,0)$
 and $t\mapsto\big(\rme^{\rmi t},0\big)$
 of $S^1$ into $\C^2$ are isotopic.
 We denote the isotopy by
 \[
 \varphi\co [0,1]\times S^1\lra\C^2\;,\qquad
 (s,t)\longmapsto\varphi_s(t)\;.
 \]
 The isotopy defines a trivial
 complex plane bundle $E=\varphi^*T\C^2$,
 which has a preferred non-vanishing
 section $X$ defined by
 $X_{(s,t)}=T_t\varphi_s(\partial_t)$.
 The complex line bundle generated
 by $X$ is denoted by $E_1$.
 Let $E_2$ be a complementary complex
 subbundle of $E_1$ in $E$
 such that the fibre of $E_2$ over
 $(0,t)$ equals the tangent plane
 of the disc $u_t(\D)$ at $u_t(0)$
 and the fibre of $E_2$ over
 $(1,t)$ is equal to
 $\{\rme^{\rmi t}\}\times\C$.
 Choose a complex trivialization
 of $E$ that preserves the splitting
 $E_1\oplus E_2$.
 Together with the normal exponential map
 of the Euclidean metric we obtain
 an isotopy $\Phi$ of embeddings
 \[
 \big(S^1\times (-\varepsilon,\varepsilon)\big)
 \times D^2_{\varepsilon}\lra\C^2
 \]
 for some small $\varepsilon>0$.
 The isotopy $\Phi$ restricts to $\varphi$ on
 $[0,1]\times S^1\times\{0\}\times\{0\}$
 such that
 $\Phi_1(t,r;z)=\big(\rme^{\rmi t},z\big)$
 and that $\Phi_s^*\rmi$ leaves the splitting
 $\big(S^1\times (-\varepsilon,\varepsilon)\big)
 \times D^2_{\varepsilon}$
 invariant.

 The desired isotopy will be a combination
 of isotopies.
 In a first, we shrink $T$ via
 \[
 \big(t,\rme^{\rmi\theta}\big)
 \longmapsto
 F\big(t,r\rme^{\rmi\theta}\big)
 \]
 into the image of $\Phi_0$,
 which is a neighbourhood of $F(S^1,0)$,
 by making $r$ small.
 The neighbourhood contains a second family
 of embedded totally real tori that are
 parametrized by
 $\Phi_0\big(t,0;r\rme^{\rmi\theta}\big)$.
 The tangent spaces to the tori
 for $t$ and $\theta$ fixed converge
 in both families to the real plane spanned
 by the vectors $X_{(0,t)}$ and
 $T_{(t,0;0)}\Phi_0\big(0,\rmi\rme^{\rmi\theta}\big)$
 as $r$ tends to zero.
 Hence, for $r$ sufficiently small the angles
 between the respective tangent
 planes is less than $\pi/2$
 so that the convex combination of
 $F\big(t,r\rme^{\rmi\theta}\big)$
 and
 $\Phi_0\big(t,0;r\rme^{\rmi\theta}\big)$
 induces an isotopy of embeddings.
 Shrinking $r$ again if necessary
 the isotopy will be through totally real tori
 because the space of all real planes
 in the Gra{\ss}mannian of all $2$-dimensional
 subspaces of $\C^2\equiv\R^4$ is open.
 The third isotopy is given by
 \[
 \big(t,\rme^{\rmi\theta}\big)
 \longmapsto
 \Phi_s\big(t,0;r\rme^{\rmi\theta}\big)
 \]
 sending $s$ to $1$
 and the last brings
 $\partial D^2_1\times\partial D^2_r$
 to the Clifford torus
 by radially expanding
 the second circle factor.
\end{proof}

%%%%%%%%%%%%%%%%%%%%%%%%%%%%%%%%%%%
%%%%%%%%%%%%%%%%%%%%%%%%%%%%%%%%%%%

\begin{ack}
  I would like to thank Matthias Schwarz
  from whom I learned the technique of
  {\it filling with holomorphic discs}.
  I would like to thank Peter Albers, Gabriele Benedetti, and
  Hansj\"org Geiges for their interest in this work.
\end{ack}

%%%%%%%%%%%%%%%%%%%%%%%%%%%%%%%%%%%
%%%%%%%%%%%%%%%%%%%%%%%%%%%%%%%%%%%


\begin{thebibliography}{10}

\bibitem{ad93}
  {\sc M. Adachi},
  \textit{Embeddings and immersions},
  Translated from the 1984 Japanese original by Kiki Hudson,
  Translations of Mathematical Monographs {\bf 124},
  American Mathematical Society, Providence, RI (1993),
  x+183.

\bibitem{al81}
  {\sc H. Alexander},
  Hulls of deformations in {${\bf C}^{n}$},
  \textit{Trans. Amer. Math. Soc.} {\bf 266} (1981),
  243--257.

\bibitem{ba94}
  {\sc A. Banyaga},
  An introduction to symplectic geometry, in:
  \textit{Holomorphic curves in symplectic geometry},
  With an appendix by M. Audin, F. Lalonde, L. Polterovich and
              the author,
  Progr. Math. {\bf 117}, Birkh\"auser, Basel (1994),
  17--40.

\bibitem{be81}
  {\sc E. Bedford},
  Stability of the polynomial hull of {$T^{2}$},
  \textit{Ann. Scuola Norm. Sup. Pisa Cl. Sci. (4)}  {\bf 8} (1981),
  311--315.

\bibitem{bg83}
  {\sc E. Bedford, B. Gaveau},
  Envelopes of holomorphy of certain {$2$}-spheres in {${\bf C}^{2}$},
  \textit{Amer. J. Math.} {\bf 105} (1983),
  975--1009.

\bibitem{bk91}
  {\sc E. Bedford, W. Klingenberg},
   On the envelope of holomorphy of a {$2$}-sphere in {${\bf C}^2$},
  \textit{J. Amer. Math. Soc.} {\bf 4} (1991),
   623--646.

\bibitem{bi65}
  {\sc E. Bishop},
  Differentiable manifolds in complex {E}uclidean space,
  \textit{Duke Math. J.} {\bf 32} (1965),
  1--21.

\bibitem{bo99}
  {\sc V. Borrelli},
  Linking class of {L}agrangian or totally real embeddings,
  \textit{Ann. Global Anal. Geom.} {\bf 17} (1999),
  371--384.

\bibitem{bo02}
  {\sc V. Borrelli},
  On totally real isotopy classes,
  \textit{Int. Math. Res. Not.} {\bf 2002},
  89--109.

\bibitem{dg14}
  {\sc J. Duval, D. Gayet},
  Riemann surfaces and totally real tori,
  \textit{Comment. Math. Helv.} {\bf 89} (2014),
  299--312.

\bibitem{el89}
  {\sc Y. Eliashberg},
   Filling by holomorphic discs and its applications, in:
  \textit{Geometry of low-dimensional manifolds, 2 ({D}urham, 1989)},
  London Math. Soc. Lecture Note Ser.  {\bf 151},
  Cambridge Univ. Press, Cambridge (1990),
  45--67.

\bibitem{fz15}
  {\sc U. Frauenfelder, K. Zehmisch},
  Gromov compactness for holomorphic discs with
  totally real boundary conditions,
  \textit{J. Fixed Point Theory Appl.} (2015).

\bibitem{gei08}
  {\sc H. Geiges},
  \textit{An introduction to contact topology},
  Cambridge Studies in Advanced Mathematics {\bf 109},
  Cambridge University Press, Cambridge (2008),
  xvi+440.

\bibitem{gz10}
  {\sc H. Geiges, K. Zehmisch},
  Eliashberg's proof of {C}erf's theorem,
  \textit{J. Topol. Anal.} {\bf 2} (2010),
  543--579.

\bibitem{gz12}
  {\sc H. Geiges, K. Zehmisch},
  Symplectic cobordisms and the strong {W}einstein conjecture,
  \textit{Math. Proc. Cambridge Philos. Soc.} {\bf 153} (2012),
  261--279.

\bibitem{gz13}
  {\sc H. Geiges, K. Zehmisch},
  How to recognize a 4-ball when you see one,
  \textit{M\"unster J. Math.} {\bf 6} (2013),
  525--554.

\bibitem{grz14}
  {\sc H. Geiges, N. R{\"o}ttgen, K. Zehmisch},
  Trapped {R}eeb orbits do not imply periodic ones,
  \textit{Invent. Math.} {\bf 198} (2014),
  211--217.

\bibitem{gi91}
  {\sc E. Giroux},
  Convexit\'e en topologie de contact,
  \textit{Comment. Math. Helv.} {\bf 66} (1991),
  637--677.

\bibitem{gr85}
  {\sc M. Gromov},
  Pseudoholomorphic curves in symplectic manifolds,
  \textit{Invent. Math.} {\bf 82} (1985),
  307--347.

\bibitem{ho93}
  {\sc H. Hofer},
  Pseudoholomorphic curves in symplectizations with applications
              to the {W}einstein conjecture in dimension three,
  \textit{Invent. Math.} {\bf 114} (1993),
  515--563.

\bibitem{ho99}
  {\sc H. Hofer},
  Holomorphic curves and dynamics in dimension three, in:
  \textit{Symplectic geometry and topology} ({P}ark {C}ity, {UT}, 1997),
  IAS/Park City Math. Ser. {\bf 7}, Amer. Math. Soc., Providence, RI (1999),
  35--101.

\bibitem{kr91}
  {\sc N. G. Kruzhilin},
  Two-dimensional spheres on the boundaries of pseudoconvex
              domains in {${\bf C}^2$},
  \textit{Izv. Akad. Nauk SSSR Ser. Mat.} {\bf 55} (1991),
  1194--1237.

\bibitem{kr94}
  {\sc N. G. Kruzhilin},
  Holomorphic disks with boundaries on totally real tori in
              {${\bf C}^2$},
  \textit{Mat. Zametki} {\bf 56} (1994),
  48--55.
  
\bibitem{la00}
  {\sc L. Lazzarini},
  Existence of a somewhere injective pseudo-holomorphic disc,
  \textit{Geom. Funct. Anal.} {\bf 10} (2000),
  829--862.

\bibitem{la11}
  {\sc L. Lazzarini},
  Relative frames on {$J$}-holomorphic curves,
  \textit{J. Fixed Point Theory Appl.} {\bf 9} (2011),
  213--256.

\bibitem{mcsa98}
  {\sc D. McDuff, D. Salamon},
  \textit{Introduction to symplectic topology},
  Oxford Mathematical Monographs, Second,
  The Clarendon Press, Oxford University Press, New York,
  1998, x+486.

\bibitem{mcsa04}
  {\sc D. McDuff, D. Salamon},
  \textit{$J$-holomorphic Curves and Symplectic Topology},
  Amer. Math. Soc. Colloq. Publ. {\bf 52},
  American Mathematical Society, Providence, RI (2004),
  xii+669.

\bibitem{ro76}
  {\sc D. Rolfsen},
  \textit{Knots and links},
  Mathematics Lecture Series {\bf 7},
  Publish or Perish, Inc., Berkeley, Calif. (1976),
  ix+439.

\bibitem{ye98}
  {\sc R. Ye},
  Filling by holomorphic curves in symplectic {$4$}-manifolds,
  \textit{Trans. Amer. Math. Soc.} {\bf 350} (1998),
  213--250.

\end{thebibliography}
\end{document}